\def\revddots{\mathinner{\mkern1mu\raise\p@
\vbox{\kern7\p@\hbox{.}}\mkern2mu
\raise4\p@\hbox{.}\mkern1mu\raise7\p@\hbox{.}\mkern1mu}}
\theoremstyle{plain}
\newtheorem{thm}{Theorem}[section]
\newtheorem{lem}[thm]{Lemma}
\newtheorem{prop}[thm]{Proposition}
\theoremstyle{definition}
\newtheorem{defn}{Definition}[section]
\theoremstyle{remark}
\newtheorem*{rem}{Remark}
\newcommand{\C}{\mathbb{C}}
\newcommand{\Ind}{\mathrm{Ind}}
\newcommand{\ind}{\mathrm{ind}}
\newcommand{\deri}{\mathrm{d}}
\def\GL{\operatorname{GL}}
\def\Sp{\operatorname{Sp}}
\def\O{\operatorname{O}}
\def\SO{\operatorname{SO}}
\def\span{\operatorname{span}}
\def\Ddots{\mathinner{\mkern1mu\raise\p@
\vbox{\kern7\p@\hbox{.}}\mkern2mu
\raise4\p@\hbox{.}\mkern2mu\raise7\p@\hbox{.}\mkern1mu}}
\title{Generalized Shalika model on $\SO_{4n}(F),$  symplectic linear model on $\Sp_{4n}(F)$ and theta correspondence}
\author{Marcela Hanzer}
\date{}
\begin{document}
\maketitle

\begin{abstract} We show that if an irreducible admissible representation of $SO_{4n}(F)$ has a generalized Shalika model, then its small theta lift to $\Sp_{4n}(F)$ has the symplectic linear model, thus answering a question posed by D. Jiang. Here $F$ is a non-archimedean field of characteristic zero.
\end{abstract}

\section{Introduction} 
The fundamental results of Arthur led to the classification of the automorphic discrete spectrum of  the classical groups. The automorphic representations of a classical group are grouped into global (Arthur) packets. Global  Arthur packets are formed using local Arthur packets. It is very important to have a way  to distinguish representations inside a local  packet, being it Arthur or Langlands packet. The characterization of the representations in a packet by models they have turns out to be very important; let us just mention the landmark work of Gan, Gross, Prasad, Waldspurger and others on restriction problems for classical groups and  existence of Bessel and Fourier-Jacobi models (\cite{GGP_I},\cite{GGPW}, etc.). The second use of models for groups over local fields or periods for groups over global fields is their use for the determination of poles of the global L-functions. In that way D. Jiang introduced the generalized Shalika model for the split group $\SO_{4n}(F),$ where $F$ is a local non-archimedean field of characteristic zero. In more detail, Jiang introduced this model in \cite{JQ_residues} with the Langlands-Shahidi method to characterize irreducible automorphic cuspidal representations $\pi$ of $\GL_{2n}$ whose global L--function $L(s,\pi,\Lambda^2)$ has a pole for $s=1.$
Moreover, Jiang  formulated conjectures about the characterizations of local Arthur packets containing a member having a non-zero generalized Shalika model (cf. the fourth section of \cite{JNQ}); these conjectures can be viewed as a specific case of on-going research into spherical varieties (cf. \cite{Sakellaridis_spherical}).

Jiang also observed the following: let $\sigma$ be an irreducible cuspidal  symplectic representation of $\GL_{2n}(F),$ where $F$ is a non-archimedean fields of characteristic zero. If we induce parabolically from the representation $\sigma$ twisted by $1/2$ to a representation of $\SO_{4n}(F),$ we get a reducible representation whose Langlands quotient has a generalized Shalika model. Similarly, if we induce from $\sigma$ (twisted buy $1/2$) parabolically to $\Sp_{4n}(F),$ the representation is reducible, and its Langlands quotient has a non-zero symplectic linear model. It turns out that these two Langlands quotients are related through theta correspondence. This fact fits nicely into interpretation of  symplecticity of representations of $\GL_{2n}(F)$ in terms of various functorialities and models existing on members of a dual pair $(\O_{4n}(F),\Sp_{4n}(F));$ this is nicely explained in \cite{JNQ2}, p.~541.

In this note, we answer a question of Jiang posed in \cite{JNQ2}, p.~542.  Namely, as we mentioned above, in the specific cases of induction from an irreducible supercuspidal symplectic representation of $\GL_{2n}(F),$ the corresponding Langlands quotients, which have a non-zero generalized Shalika model, and a non-zero symplectic linear model, respectively, are related through the theta correspondence. We prove that this feature occurs generally; i.e., if an irreducible smooth representation of $\SO_{4n}(F)$ has a non-zero generalized Shalika model, then its  small theta lift to  $\Sp_{4n}(F)$ is non-zero and has a non-zero symplectic linear model. This result suggests that the functorialities mentioned in the preceding paragraph (cf. \cite{JNQ2}, p.~541) can be generalized in an appropriate setting, raising further questions about Gelfand-Graev models and Fourier-Jacobi models of the representations of $\SO_{4n}(F)$ and  $\SO_{4n}(F)$ we have studied.

Our proof is based on a direct calculation of a twisted Jacquet module of the Weil representation (for a fixed additive character), and not on the more thorough study of the properties of representations having generalized Shalika or symplectic linear model.
 We adopted the latter approach  in a toy  example where we worked out the case of $n=1$ (\cite{DHL}). Here a slight disambiguation is needed (as we explain in the next subsection), since actually $\O_{4n}(F)$ and $\Sp_{4n}(F)$ occur as a dual reductive pair, so we need to extend this irreducible representation of  $\SO_{4n}(F)$ to an irreducible representation of  $\O_{4n}(F).$

\subsection{Notation and Preliminaries}

Let $F$ be a non-archimedean field of characteristic zero. We use Howe duality conjecture, which is now proved for any residual characteristic (cf.\cite{Gan_Takeda_Howe_conjecture}), so we do not need any additional assumptions on residual characteristic. We fix a non-trivial additive character $\psi: F \rightarrow \C^*$.

Let 
\[J_n:= \begin{pmatrix} & & & 1 \\ & & 1 & \\ & \Ddots & & \\ 1 & & &  \\ \end{pmatrix} \in \GL_{n}(F).\]
We realize the $F$--points of (split) special orthogonal group $\O_{4n}$ as
\[\O_{4n}(F) = \{A \in \GL_{4n}(F) | A^t J_{4n}A = J_{4n},\}\]
and $\SO_{4n}(F)$ is realized  a subgroup of $\O_{4n}(F)$ consisting of matrices of determinant  $1.$ We fix the maximal diagonal torus $T$ and the Borel subgroup $B$ of upper triangular matrices in $\SO_{4n}(F).$   We let $P=MN$ be a standard maximal parabolic subgroup of $\SO_{4n}(F)$, whose Levi subgroup $M$ is isomorphic to $\GL_{2n}(F)$.\\

 It is embedded via 
\[\iota:\GL_{2n}(F) \hookrightarrow \SO_{4n}(F) , \ g \mapsto  \begin{pmatrix} g & 0 \\ 0 & J_{2n}{g}^{-t}J_{2n} \\\end{pmatrix}\]
and the $F$-points of  its unipotent radical $N$ are given by all matrices
\[y(X) = \begin{pmatrix} I_{2n} & X \\ 0 & I_{2n} \\\end{pmatrix}, \]
such that ${X^t} = - J_{2n} X J_{2n}$. We refer to $P$ as the Siegel subgroup.
The subgroup $\mathcal{H} \subset P(F)$ generated by all $\iota(g)$ for $g \in \Sp_{2n}(F)$ and all $y \in N(F)$ is called the \textit{generalized Shalika subgroup} of $\SO_{4n}(F)$.
Here $\Sp_{2n}(F)$ is the symplectic group realized as
\[\Sp_{2n}(F) = \left \{A \in \GL_{2n}(F) | A^t\begin{bmatrix}
0&J_n\\
-J_n&0\\
\end{bmatrix}A = \begin{bmatrix}
0&J_n\\
-J_n&0\\
\end{bmatrix}\right \}.\]
We consider $\psi$ to be a character  of $N$ by 
 \[\psi(y(X)) = \psi\left(tr\left(\begin{pmatrix} -I_n&0\\
0&I_n\end{pmatrix}X\right)\right)\] 
and then we  extend it  to a character $\psi_{\mathcal{H}}$ of $\mathcal{H}$ by demanding it is trivial on $\iota(\Sp_{2n}(F))$ (this is well defined because it is easily checked that $\mathcal{H}$ is the stabilizer of a character $\psi$ in $P$). 
 
\begin{defn} 
An irreducible admissible representation $\pi$ of $\SO_{4n}(F)$ is said to have a non-zero generalized Shalika model if 
\[\mathrm{Hom}_{\mathcal{H}}(\pi, \psi_{\mathcal{H}}) \neq 0.\]
\end{defn}

The group $\Sp_{2n}(F)\times \Sp_{2n}(F)$ injects into $\Sp_{4n}(F)$ via
\begin{equation}
\label{eq_SL_Sp4}
 \left(\begin{pmatrix}  a & b \\  c&d \\ \end{pmatrix}, \begin{pmatrix}  a_1 & b_1 \\  c_1&d_1 \\ \end{pmatrix}\right)\mapsto \begin{pmatrix}  a & & &  b \\ & a_1 & b_1& \\ & c_1 & d_1 & \\ c& & & d \\ \end{pmatrix}.
 \end{equation}
Here $a,b,c,d,a_1,b_1,c_1,d_1$ are $n\times n$ matrices.

\begin{defn} An irreducible admissible representation $\pi$ on $\Sp_{4n}(F)$ has a symplectic linear model if 
\[ \mathrm{Hom}_{\Sp_{2n}(F)\times \Sp_{2n}(F)}(\pi, 1_{\Sp_{2n}(F) \times \Sp_{2n}(F)}) \neq 0.\]
\end{defn}

Since we need representations of the full orthogonal group to enter the theta correspondence, we recall the following well-known criterion. Let $\epsilon \in \O_{2n}(F)$ be the element 
$$ \epsilon=  \begin{pmatrix} I_{n-1} & & &  \\ & & 1 &  \\ &  1 & & \\  & & & I_{n-1}  \end{pmatrix}. $$
For an irreducible admissible representation $\tau$ of $\SO_{2n}(F),$ we denote by $\tau^{\epsilon}$ representation of $\SO_{2n}(F)$ on the same space, defined by $\tau^{\epsilon}(g)=\tau(\epsilon g\epsilon^{-1}).$ We can pass between irreducible admissible representations of $\O_{2n}(F)$ and $\SO_{2n}(F)$ as follows:

\begin{lem}[cf.\cite{MVW} 3.II.5, Lemme]\label{lem:thetacorres} \
\label{lem_SO_O}

\begin{enumerate}
	\item Let $\pi$ be an irreducible admissible representation of $\O_{2n}(F)$. Then \\ $\pi|_{\SO_{2n}(F)}$ is irreducible if and only if $\pi \ncong \pi \otimes det$.
	\item Let $\tau$ be an irreducible admissible representation of $\SO_{2n}(F)$. Then either 
	\begin{itemize}
	\item[(A)] $\tau \ncong \tau^\epsilon;$ then  $Ind^{\O_{2n}(F)}_{\SO_{2n}(F)}(\tau)=:\pi$ is irreducible and satisfies $\pi=\pi\otimes det$, or 
	\item[(B)] $\tau \cong \tau^\epsilon;$ then  $Ind^{\O_{2n}(F)}_{\SO_{2n}(F)}(\tau)$ is reducible and the direct sum of two non-equivalent irreducible representations $\pi$ and $\pi\otimes det$. 
	\end{itemize}
\end{enumerate}
\end{lem}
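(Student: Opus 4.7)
The plan is to apply standard Clifford--Mackey theory to the index-two normal inclusion $\SO_{2n}(F) \trianglelefteq \O_{2n}(F)$. The quotient is cyclic of order two, generated by the image of $\epsilon$, and has unique non-trivial character $\det$.

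For part (1), I would restrict $\pi$ to $\SO_{2n}(F)$ and invoke Clifford's theorem: the restriction is semisimple, its irreducible constituents form a single orbit under conjugation by $\epsilon$, and each occurs with the same multiplicity $m$. If a constituent $\tau$ satisfies $\tau \not\cong \tau^\epsilon$, then $\pi|_{\SO_{2n}(F)} = m(\tau \oplus \tau^\epsilon)$ is reducible; if instead $\tau \cong \tau^\epsilon$, then $\tau$ extends to $\O_{2n}(F)$ (pick an intertwiner realizing $\tau \cong \tau^\epsilon$ and rescale so its square equals $\tau(\epsilon^2)$), and the two inequivalent extensions are $\pi$ and $\pi \otimes \det$. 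By uniqueness of the Clifford correspondence, $m = 1$ in the latter case, so $\pi|_{\SO_{2n}(F)} = \tau$ is irreducible. Combining the two cases, $\pi|_{\SO_{2n}(F)}$ is irreducible if and only if the two extensions are distinct, i.e., $\pi \not\cong \pi \otimes \det$.

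For part (2), given $\tau$ on $\SO_{2n}(F)$, I would apply Mackey's irreducibility criterion to $\Ind_{\SO_{2n}(F)}^{\O_{2n}(F)}(\tau)$: since $\SO_{2n}(F)$ is normal, the double-coset sum collapses and irreducibility is equivalent to $\tau \not\cong \tau^\epsilon$, giving case (A). The restriction is $\tau \oplus \tau^\epsilon$, which is reducible, so by part (1), $\pi := \Ind(\tau)$ satisfies $\pi \cong \pi \otimes \det$. In case (B), the analysis of part (1) shows $\tau$ admits two inequivalent extensions $\pi$ and $\pi \otimes \det$; Frobenius reciprocity gives $\hom_{\O_{2n}(F)}(\pi, \Ind(\tau)) \cong \hom_{\SO_{2n}(F)}(\pi|_{\SO_{2n}(F)}, \tau) = \hom_{\SO_{2n}(F)}(\tau, \tau) = \C$ and similarly for $\pi \otimes \det$, so both embed into $\Ind(\tau)$ with multiplicity one; the dimension identity $\dim \Ind(\tau) = 2\dim \tau$ then forces $\Ind(\tau) \cong \pi \oplus (\pi \otimes \det)$.

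The argument presents no genuine obstacle: it is a textbook application of Clifford--Mackey theory for an index-two normal inclusion. The only steps requiring a line of care are the vanishing of the cohomological extension obstruction when $\tau \cong \tau^\epsilon$ (automatic for $\Z/2\Z$, by the explicit scalar adjustment above) and the bookkeeping that in case (B) the two extensions are genuinely inequivalent (if not, Schur's lemma applied to the irreducible restriction $\tau$ would force the two choices of rescaling to coincide, a contradiction).
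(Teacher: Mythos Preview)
The paper does not supply its own proof of this lemma: it is quoted directly from \cite{MVW}, 3.II.5, and used as a black box. Your Clifford--Mackey argument is the standard way to establish such a result and is essentially correct.

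One small point of care: in case (B) you conclude by the ``dimension identity $\dim \Ind(\tau) = 2\dim\tau$'', but these representations are infinite-dimensional, so that phrasing is not literally available. The clean replacement is to use Mackey's restriction formula $\Ind(\tau)|_{\SO_{2n}(F)} \cong \tau \oplus \tau^{\epsilon} \cong \tau \oplus \tau$: since $\pi$ and $\pi\otimes\det$ are inequivalent irreducibles each restricting to $\tau$, their images in $\Ind(\tau)$ meet trivially and together exhaust the restriction, hence exhaust $\Ind(\tau)$. With that adjustment the argument goes through without difficulty.
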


We use this lemma to adapt the theta correspondence to representations of $\SO_{2n}(F).$ Let $\omega_{m,k}$ denote the Weil representation (with respect to an additive character $\psi'$) of a dual pair consisting of the split orthogonal group $O(V)(F)$  where dimension of $V$ is $2m$ and of the symplectic group $\Sp(W)$ where the dimension of $W$ is $2k.$ The maximal quotient of $\omega_{m,k}$ on which  $O(V)(F)=\O_{2m}(F)$ acts as a multiple of an irreducible representation $\pi$ decomposes as $\pi \otimes \Theta(\pi,k),$ where $\Theta(\pi,k)$ is a finite-length $\Sp_{2k}(F)$--module. This module has  the unique irreducible quotient (Howe conjecture) which we denote by $\theta (\pi,k).$ We analogously define an irreducible $\Sp_{2k}(F)$--module $\theta(\tau,k)$ for an irreducible representation $\tau$ of  $\SO_{2m}(F).$ We have

\begin{equation}
\label{eq_theta1}
\theta(\tau, k) \cong  \theta(\pi,k) \text { if (A)}
\end{equation}
\[\theta(\tau, k) := \theta(\pi,k) \oplus \theta(\pi \otimes det,k) \text { if (B)}.\]
\begin{rem}
In the remainder of this paper, we are concerned with the theta lifts of irreducible representations of $\SO_{4n}(F)$ to irreducible representations of $\Sp_{4n}(F),$ so we are always dealing with the Weil representation $\omega_{2n,2n}$ so we denote $\theta (\tau, 2n)$ by $\theta (\tau).$ We retain the notation from Lemma \ref{lem_SO_O}.

Assume that $\tau$ is in irreducible representation of $\SO_{4n}(F)$ such that it satisfies condition $(A)$ from  Lemma \ref{lem_SO_O} and that it has a non-zero generalized Shalika functional, say $\lambda.$  Then, $\pi|_{\SO_{4n}(F)}=\tau\oplus \tau^{\epsilon}$ and we can define a Shalika functional on the representation $\pi$ by prescribing that it is equal to $\lambda$ on $\tau$ and zero on $\tau^{\epsilon}.$ If $\tau$ with  a non-zero generalized Shalika model satisfies $(B)$ then the situation is even more straightforward since then $\pi|_{\SO_{4n}(F)}=\tau.$
So, we may conclude that we can always extend generalized  Shalika functional from irreducible representation of $\SO_{4n}(F)$ to irreducible representations of $\O_{4n}(F)$ in the sense of Lemma \ref{lem_SO_O}.

We note that in  (very limited number) of explicitly known representations $\tau$ with the non-zero generalized  Shalika models (\cite{JNQ2},\cite{JQ_residues},\cite{DHL}), we always had in these examples the situation $(A).$  We know that the following holds (\cite{Sun_Zhu}):
\begin{thm}
\label{theta_occurrence}Assume that $\sigma$ is an irreducible admissible representation of the split $\O_{2m}(F).$ Then the following holds:
\[n(\sigma)+n(\sigma\otimes \mathrm{det})=2m.\]
Here $n(\sigma)$ denotes the rank of the first non-zero occurrence of the representation $\sigma$ in theta correspondence.
\end{thm}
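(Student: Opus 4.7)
The cited theorem is the Kudla-Rallis conservation relation, proved by Sun and Zhu, and it is a deep result in its own right. My proof proposal will lay out the main structural strategy rather than attempt the full technical argument. The equality splits naturally into two inequalities, and the two halves are of very different character.

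For the upper bound $n(\sigma)+n(\sigma\otimes\mathrm{det})\leq 2m$, the plan is to invoke the doubling method of Piatetski-Shapiro and Rallis, exploiting the see-saw dual pair consisting of $\Sp(W)\times \Sp(W)$ inside the doubled symplectic group $\Sp(W\oplus W^{-})$, paired against the diagonal $\O(V)\subset \O(V)\times \O(V)$. The local doubling zeta integral, evaluated at a specific critical value of the complex parameter $s$ tied to the conservation threshold, produces a non-trivial pairing between the Howe quotients attached to $\sigma$ and to $\sigma\otimes\mathrm{det}$. Combined with the MVW involution, which for orthogonal groups relates the contragredient to $\sigma\otimes\mathrm{det}$, this non-vanishing pairing forces the sum of first occurrences to be at most $2m$.

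For the lower bound $n(\sigma)+n(\sigma\otimes\mathrm{det})\geq 2m$, the Kudla-Rallis conservation conjecture proper, I would proceed by induction on the rank $m$. The Rallis tower property (once $\Theta(\sigma,2k)\neq 0$, the theta lifts $\Theta(\sigma,2k')$ remain non-zero for all $k'\geq k$) supplies the basic monotonicity. The heart of the argument is a careful analysis of the Jacquet module of the Weil representation $\omega_{m,k}$ with respect to the Siegel parabolic of $\Sp_{2k}(F)$ via the standard Kudla filtration, which reduces non-vanishing statements at the conservation threshold to analogous statements for smaller-rank dual pairs and feeds the induction. The supercuspidal base case, for which such a reduction fails, requires a direct construction of a non-zero theta lift at the predicted rank, using harmonic analysis on $\Sp_{2k}(F)$ together with the MVW involution to transfer information between $\sigma$ and $\sigma\otimes\mathrm{det}$.

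The main obstacle, and indeed the technical core of the Sun-Zhu argument, is the lower bound in the supercuspidal base case, where one must exhibit non-vanishing of the theta lift precisely at the conservation boundary. Even granting the inductive reduction via Kudla's filtration, controlling the interaction between the $\mathrm{det}$-twist on the orthogonal side and the first occurrence on the symplectic side in this base case is the crux, and demands substantial input beyond the formal strategy outlined here.
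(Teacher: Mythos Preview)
The paper does not prove this theorem at all: it is quoted with a citation to Sun--Zhu and then immediately used as a black box to guarantee that the small theta lift $\theta'(\tau)$ to $\Sp_{4n}(F)$ is nonzero. There is therefore no argument in the paper against which to compare your proposal.

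Viewed as a sketch of the actual conservation-relation proof, your outline has the two halves interchanged. The inequality $n(\sigma)+n(\sigma\otimes\mathrm{det})\ge 2m$ is the direction already established by Kudla and Rallis; it is a non-existence statement (the two twists cannot both occur too early) and is not ``the conservation conjecture proper.'' The hard direction, and the genuine content of Sun--Zhu, is $n(\sigma)+n(\sigma\otimes\mathrm{det})\le 2m$: one must \emph{produce} a nonzero lift of $\sigma\otimes\mathrm{det}$ at rank $2m-n(\sigma)$. Your identification of the delicate supercuspidal step therefore lands on the wrong inequality. In addition, for $p$-adic orthogonal groups the MVW involution gives $\widetilde{\sigma}\cong\sigma$, not $\widetilde{\sigma}\cong\sigma\otimes\mathrm{det}$, so that ingredient in your ``upper bound'' paragraph does not function as written. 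None of this affects the paper under review, which invokes only the statement.
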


Because of that,  if $\tau$ is irreducible representation of $\SO_{4n}(F)$ in situation $(A),$ we have that $n(\pi)=2n$ and $\theta(\tau,2n)=\theta(\pi,2n)\neq 0.$ We denote $\theta'(\tau)=\theta(\tau,2n).$

If  $\tau$ is in situation $(B),$ at least one of the representations $\pi,\;\pi \otimes det$ has a non-zero theta lift to the rank $2n.$ Now, we denote by $\theta'(\tau)$ one of the non-zero lifts $\theta(\pi,2n)$ or $\theta(\pi \otimes det,2n)$ (and both $\pi$ and $\pi \otimes det$ have a non-zero generalized Shalika model).
\end{rem}

We use $\mathrm{ind}$ to denote the compact induction, and $\mathrm{Ind}$ to denote the non-compact induction. By $\twoheadrightarrow $ we denote a surjective mapping. From now on, we study representations of groups $\SO_{4n}(F)$ and $\Sp_{4n}(F)$ for $n\ge 2,$ since $n=1$ case is resolved in \cite{DHL}.
\section{}
We continue to assume that $(\pi,V)$ is an irreducible representation of $\O_{4n}(F)$ with a non-zero generalized Shalika model such that $\theta(\pi)\neq 0.$
We want to express a property of having non-zero generalized Shalika model in terms of twisted Jacquet modules. We continue to use the notation from the previous section. We form a subspace
\[V_{\psi}(N):=\span\{\pi(n)v-\psi(n)v:v\in V,n\in N\},\]
where $N$ is the unipotent radical of the Siegel standard parabolic subgroup of $\SO_{4n}(F).$
Then, it is straightforward that  the twisted Jacquet module $R_{\mathcal{H},\psi}(\pi):=V/V_{\psi}(N)$ is a $\Sp_{2n}(F)$--module, since, by definition, $\Sp_{2n}(F)\subset \GL_{2n}\cong M$ is a stabilizer of a character $\psi$ of $N.$ Now, the existence of the non-zero generalized Shalika model on $\pi$ is equivalent to the fact that $R_{\mathcal{H},\psi}(\pi),$ as  a $\Sp_{2n}(F)$--module, has the trivial quotient, i.e. there exists a non-zero functional $\lambda$ on $R_{\mathcal{H},\psi}(\pi)$ satisfying
\[\lambda(\pi(s)v+ V_{\psi}(N))=\lambda(v+ V_{\psi}(N)).\]

\subsection{Calculation of $R_{\mathcal{H},\psi}(\omega_{2n,2n})$}
Recall that we view $\omega_{2n,2n}$ as a representation of $\O_{4n}(F)\times \Sp_{4n}(F).$ The above discussion motivates us to examine $R_{\mathcal{H},\psi}(\omega_{2n,2n})$ more thoroughly. This is obviously an $\Sp_{2n}(F)\times \Sp_{4n}(F)$-module.   Note that we have a non-trivial additive character $\psi$ appearing in the definition of the generalized Shalika model; assume that an additive character $\psi_a(x):=\psi (ax),$ where $a\in F^{\ast},$ enters the definition of theta correspondence (we do not emphasize $\psi_a$ in the notation of $\omega_{2n,2n}$). A general description of the twisted Jacquet modules of this kind is given in (\cite{MVW}, pp.~72, 73). We further elaborate on this description which is given not necessarily for the Weil representation, but in the more general context.

We study the Schroedinger model of the Weil representation $\omega_{2n,2n}$ defined in the following way: let $V=V_{2n}'\oplus V_{2n}''$ be a complete polarization of the  quadratic space $V$ on which $\O_{4n}(F)$ acts. Let $W$ be $4n$--dimensional skew-symmetric space on which $\Sp_{4n}(F)$ acts. We denote by  $\mathbf{W}=V\otimes W=V_{2n}'\otimes W\oplus V_{2n}''\otimes W.$ Then, the Schroedinger model of  $\omega_{2n,2n}$ is realized on the Schwartz space $S(V_{2n}'\otimes W).$ Sometimes we use an isomorphism $V_{2n}'\otimes W\cong W^{2n},$ so that given a basis $\{e_1,\ldots,e_{2n}\}$ of an isotropic space $V_{2n}'$ we have 
\[e_1\otimes w_1+\cdots e_{2n}\otimes w_{2n}\mapsto (w_1,\ldots,w_{2n}).\]
 To be able to directly apply formulas for the Weil representation given in (\cite{Kudla1}, p. 38) we take a  little bit different matrix realization of $\O_{4n}(F)$ (isomorphic to ours defined above) where in the definition of $\O_{4n}(F)$ the symmetric form is defined not by using the matrix $J_{4n}$ but the matrix $\begin{bmatrix}
0&I_{2n}\\
I_{2n}&0\\
\end{bmatrix}.$ Then,
\[N=\left\{n(S)=\begin{bmatrix}
I_{2n}&S\\
0&I_{2n}\\
\end{bmatrix}:S^{t}=-S\right\}.\] 
Note that then the action of $N$ in $\omega_{2n,2n}$  is given by the homothety (\cite{Kudla1}, p. 38)
\[\omega_{2n,2n}(n(S),1)\phi(w)= \psi_a( \frac{1}{2}tr(\langle w,w\rangle S))\phi(w),\]
where $w=(w_1,\ldots w_{2n})\in W^{2n},$ $\phi \in S(W^{2n}).$ Here $\langle x,x\rangle$ denotes $2n \times 2n$ skew-symmetric matrix whose $(i,j)$-entry is $\langle w_i,w_j\rangle.$
We examine (we adopt the notation of \cite{MVW}, p.~72)
\[\Omega(\psi)=\left \{w\in W^{2n}:\psi_a( \frac{1}{2}tr(\langle w,w\rangle S))=\psi_{\mathcal{H}}(S)=\psi(tr(\begin{bmatrix}
0&I_{n}\\
-I_{n}&0\\
\end{bmatrix}S))\right \},\]
where  the action of the Shalika character is adjusted because of the modified definition of $N.$ By changing $a\mapsto a^{-1},$
we get the condition
\[\Omega(\psi)=\left \{w\in W^{2n}:\psi (tr(S(\frac{1}{2}(\langle w,w\rangle-a\begin{bmatrix}
0&I_{n}\\
-I_{n}&0\\
\end{bmatrix})))=1,\forall S^{t}=-S \in M_n(F)\right \}.\]

By Lemma on p. 73 of \cite{MVW}, the restriction on $\Omega(\psi)$ gives the isomorphism of $R_{\mathcal{H},\psi}(\omega_{2n,2n})$ with the action of $\Sp_{2n}(F)\times \Sp_{4n}(F)$ on  $S(\Omega(\psi)).$ Now we examine this action more throughly.

We can get rid of $\psi$ in the above definition of $\Omega(\psi).$  We see that in the following calculation. We define an skew-symmetric matrix $A:=\frac{1}{2}\langle w,w\rangle-a\begin{bmatrix}
0&I_{n}\\
-I_{n}&0\\
\end{bmatrix}.$ We put $A=\begin{bmatrix}x&b\\
-b^t&d\\
\end{bmatrix},$ where $x^t=-x, d^t=-d$ and $S=\begin{bmatrix}a_1&b_1\\
-b_1^t&d_1\\
\end{bmatrix}$ with $a_1^t=-a_1, d_1^t=-d_1.$ The condition becomes
\[\psi(tr(a_1x-b_1b^t-b_1^tb+d_1d))=1,\]
for all $\begin{bmatrix}a_1&b_1\\
-b_1^t&d_1\\
\end{bmatrix}.$ If we take  $a_1=d_1=0$ and $b_1=\lambda e_{i,j},$ where $e_{i,j}$ is a $n\times n$ matrix whose entries are all zero except  the entry $(i,j)$ which is $1.$
We get that  $\psi(2\lambda b_{i,j})=1,$ for all $\lambda \in F^{\ast}.$ Since $\psi$ is non-trivial, we get that $b_{i,j}=0.$ This holds for every $(i,j)$ so that $b=0.$ Since $n\ge 2,$ we can take $a_1=d_1=\lambda e_{i,j}-\lambda e_{j,i},$ for some $i\neq j.$ Then, the condition becomes
$\psi(2\lambda (x+d)_{j,i})=1,\forall \lambda \in  F^{\ast}.$ We get that $ (x+d)_{j,i}=0,$ for all $j\neq i.$ We get that $x+d=0.$ If we take $a_1=-d_1=\lambda e_{i,j}-\lambda e_{j,i},$ for some $i\neq j,$ we analogously get $(x-d)_{j,i}=0$ and then we get $x=d=0.$ Thus,
\[A=\frac{1}{2}\langle w,w\rangle-a\begin{bmatrix}
0&I_{n}\\
-I_{n}&0\\
\end{bmatrix}=0.\] 
Thus,
\[\Omega(\psi)=\left \{w\in W^{2n}:\frac{1}{2}\langle w,w\rangle-a\begin{bmatrix}
0&I_{n}\\
-I_{n}&0\\
\end{bmatrix}=0\right \}.\]
Recall that the action of $\Sp_{2n}(F)\times \Sp_{4n}(F)$ on $w=e_1\otimes w_1+\cdots e_{2n}\otimes w_{2n}$ is given as follows: for $(g_1,g_2)\in \Sp_{2n}(F)\times \Sp_{4n}(F)$ we have
\[(g_1,g_2)(e_1\otimes w_1+\cdots e_{2n}\otimes w_{2n})=g_1e_1\otimes g_2 w_2+\cdots+g_1e_{2n}\otimes g_2w_{2n}.\]
We put $g_1e_i=\sum_{l=1}^{2n}a_{l,i}e_l,\;i=1,\ldots,2n.$ So we get
\begin{equation}
\label{eq:action}
(g_1,g_2)(e_1\otimes w_1+\cdots e_{2n}\otimes w_{2n})=e_1\otimes (\sum_{i=1}^{2n}a_{1,i}g_2w_i)+\cdots+e_{2n}\otimes (\sum_{i=1}^{2n}a_{2n,i}g_2w_i).
\end{equation} 
We denote $w_j'=\sum_{i=1}^{2n}a_{j,i}g_2w_i.$ Now, it is a straightforward that
\begin{equation}
\label{eq:preserving_product}
\langle w_i',w_j' \rangle=\langle w_i,w_j \rangle, \forall i,j
\end{equation}
 (we use that $g_1\in Sp_{2n}(F),$ where we now realize   $Sp_{2n}(F)$ as
\[\Sp_{2n}(F)=\left\{g_1\in GL_{2n}(F):g_1^t\begin{bmatrix}
0&I_{n}\\
-I_{n}&0\\
\end{bmatrix}g_1=\begin{bmatrix}
0&I_{n}\\
-I_{n}&0\\
\end{bmatrix}\right\}).\]
This is, of course, what we knew in advance and it just means that the action of $\Sp_{2n}(F)\times \Sp_{4n}(F)$ preserves $\Omega(\psi).$ 

We want to analyze the orbits of this action.

\begin{lem}
\label{lem:transitive}
The action of $\Sp_{2n}(F)\times \Sp_{4n}(F)$ on  $\Omega(\psi)$ is transitive.
\end{lem}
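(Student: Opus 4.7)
The plan is to interpret $\Omega(\psi)$ geometrically as a space of symplectic frames and then appeal to Witt's extension theorem for symplectic spaces.

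First I would observe that an element $w=(w_1,\ldots,w_{2n})\in W^{2n}$ lies in $\Omega(\psi)$ exactly when its Gram matrix $\bigl(\langle w_i,w_j\rangle\bigr)_{i,j}$ equals the fixed invertible skew-symmetric matrix $2aJ$, where $J=\begin{bmatrix} 0 & I_n\\ -I_n & 0\end{bmatrix}$ and $a\in F^\ast$. Because this Gram matrix is non-degenerate, the vectors $w_1,\ldots,w_{2n}$ are automatically linearly independent and span a non-degenerate $2n$-dimensional symplectic subspace $U_w\subseteq W$; in effect, each $w\in\Omega(\psi)$ is a (uniformly scaled) symplectic basis of such a subspace.

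Next, given any two points $w,w'\in\Omega(\psi)$, the assignment $w_i\mapsto w_i'$ extends by linearity to an isomorphism $U_w\to U_{w'}$ which preserves the symplectic form on a basis, hence is an isometry between non-degenerate symplectic subspaces of $W$. Since $\dim W=4n\geq 2\cdot 2n$, Witt's extension theorem for non-degenerate symplectic spaces produces an element $g_2\in\Sp_{4n}(F)$ with $g_2 w_i=w_i'$ for every $i$. Taking $g_1=1$, so that the matrix $(a_{l,i})$ appearing in \eqref{eq:action} is the identity, the action formula yields $(g_1,g_2)\cdot w=w'$, which is the desired transitivity.

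I do not expect a real obstacle here: the only point requiring care is the linear independence of the $w_i$, which is immediate from invertibility of $2aJ$, and the verification that Witt's theorem applies, which needs only that $U_w$ and $U_{w'}$ sit inside a common symplectic space of strictly larger dimension. In fact this argument shows that $1\times\Sp_{4n}(F)$ already acts transitively on $\Omega(\psi)$; the $\Sp_{2n}(F)$-factor will presumably only become relevant when one computes stabilizers in the next step in order to identify $R_{\mathcal{H},\psi}(\omega_{2n,2n})$ as an $\Sp_{2n}(F)\times\Sp_{4n}(F)$-module.
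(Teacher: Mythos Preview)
Your argument is correct and essentially identical to the paper's proof: both observe that the defining condition forces $w_1,\ldots,w_{2n}$ to be a (scaled) symplectic basis of a non-degenerate $2n$-dimensional subspace, define the isometry $w_i\mapsto w_i'$, and extend it via Witt's theorem to some $g_2\in\Sp_{4n}(F)$ so that $(1,g_2)\cdot w=w'$. Your closing remark that $\{1\}\times\Sp_{4n}(F)$ already acts transitively is exactly what the paper uses as well.
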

\begin{proof} Note that for $w=e_1\otimes w_1+\cdots+ e_{2n}\otimes w_{2n}=(w_1,\ldots,w_{2n})\in \Omega (\psi)$ the defining relation of $ \Omega (\psi)$ guarantees  that the set $\{w_1,w_2,\ldots,w_{2n}\}$ is linearly independent (these vectors form a symplectic basis (up to scalar) of $2n$--dimensional non-degenerate subspace of $W$). An element $g_2\in \Sp_{4n}(F)$ turns $\span\{w_1,w_2,\ldots,w_{2n}\}$ into another non-degenerate $2n$--dimensional subspace of $W$ with a (up to scalar) symplectic basis $\{g_2 w_1,\ldots g_2w_{2n}\},$ and then $g_1$  acts on the $\{g_2w_1,\ldots g_2w_{2n}\}$ by turning it into another basis of the same space.

Let $w=(w_1,\ldots,w_{2n}), w'=(w_1',\ldots,w_{2n}')\in \Omega(\psi)$ and denote 
\[V_1=\span\{w_1,\ldots,w_{2n}\}\text{ and } V_2=\span \{w_1',\ldots,w_{2n}'\}.\]
We define $f:V_1\to V_2$ with $f(w_i)=w_i',\;i=1,2,\ldots,2n.$ It is obvious that $f$ is an isometry. By the  Witt's theorem, there exists an isometry on $W$ (thus an element $g_2 \in\Sp_{4n}(F)$) extending $f.$ This means that $(1,g_2)w=w'.$

 \end{proof} 
We fix $w_0=(w_1,\ldots,w_{2n})$ in $\Omega(\psi)$ and let $G_1\subset \Sp_{2n}(F)\times \Sp_{4n}(F)$ is a stabilizer of that point.  By the known results (cf.~\cite{MVW}, p.73), since there is only one orbit for this action on $\Omega(\psi),$ we have
\begin{equation}
\label{eq:the representation_I}
R_{\mathcal{H},\psi}(\omega_{2n,2n})\cong \ind_{G_1}^{\Sp_{2n}(F)\times \Sp_{4n}(F)}\omega_{w_0}.	
\end{equation}

Here $\omega_{w_0}$  is a representation of $\Sp_{2n}(F)\times \Sp_{4n}(F)$ satisfying
\[(\omega_{2n,2n})(g_1,g_2)f(w_0)=\omega_{w_0}(g_1,g_2)f(w_0(g_1,g_2)).\]
Since $\omega_{w_0}$ is a character, it must be equal to 1. Indeed, when we check the formulas from (\cite{Kudla1}, p.~38) we get
\[(\omega_{2n,2n})(g_1,1)f(w_0)=f(g_1^te_1\otimes w_1+\cdots+g_2^te_{2n}\otimes w_{2n}),\]
and
\[(\omega_{2n,2n})(1,g_2)f(w_0)=f(e_1\otimes g_2^{-1}w_1+\cdots +e_{2n}\otimes g_2^{-1}w_{2n}).\]

\begin{lem}Let $G_1$ be the stabilizer of $w_0$ with respect to  $\Sp_{2n}(F)\times \Sp_{4n}(F)$ action given by (\ref{eq:action}). Then, 
\[G_1\cong \Sp_{2n}(F)\times \Sp_{2n}(F)\]
given with
\[(g_1,g_2)\mapsto (g_1^{-t},(g_1,g_2)),\]
where $(g_1,g_2)$ from the right hand side belongs to $\Sp_{2n}(F)\times\Sp_{2n}(F)\subset \Sp_{4n}(F),$ and where $W$ is decomposed as a orthogonal direct sum of non-degenerate symplectic spaces of dimensions $2n$ and each copy of $\Sp_{2n}(F)$ is the symplectic group of the corresponding subspace.
\end{lem}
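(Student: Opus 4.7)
The plan is to decompose the verification into three steps: first show that any stabilizing $g_2$ must preserve the subspace $V_1 := \span\{w_1,\ldots, w_{2n}\}$; next use the symplectic structure to split $g_2$ according to the orthogonal decomposition $W = V_1 \oplus V_1^\perp$; and finally extract the precise matrix relation between $g_1$ and $g_2|_{V_1}$.

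For the first step, I take $(g_1, g_2) \in G_1$ and write $A = (a_{l,i})$ for the matrix of $g_1$ in the basis $\{e_i\}$ of the $2n$--dimensional symplectic space on which $\Sp_{2n}(F)$ acts. The stabilizer condition read off from (\ref{eq:action}) reads
\[\sum_{i=1}^{2n} a_{j,i}\, g_2 w_i = w_j, \qquad j = 1,\ldots,2n.\]
Since $A$ is invertible, multiplying by $A^{-1}$ expresses each $g_2 w_i$ as a linear combination of the $w_j$'s, so $g_2(V_1) = V_1$. The defining equation of $\Omega(\psi)$ tells us the Gram matrix of $\{w_1,\ldots,w_{2n}\}$ is the non-degenerate $2a \begin{bmatrix} 0 & I_n \\ -I_n & 0 \end{bmatrix}$, so $V_1$ is a non-degenerate symplectic subspace and $W = V_1 \oplus V_1^\perp$ orthogonally. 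As $g_2$ is symplectic and preserves $V_1$ it also preserves $V_1^\perp$; write $g_2 = \alpha \oplus \beta$ with $\alpha \in \Sp(V_1)$ and $\beta \in \Sp(V_1^\perp)$. Identifying $\Sp(V_1) \cong \Sp_{2n}(F)$ via the basis $\{w_1,\ldots,w_{2n}\}$ (since the relation $\alpha^t(2aJ_0)\alpha = 2aJ_0$ is equivalent to $\alpha^t J_0 \alpha = J_0$, with $J_0 = \begin{bmatrix} 0 & I_n \\ -I_n & 0 \end{bmatrix}$) and $\Sp(V_1^\perp) \cong \Sp_{2n}(F)$ via any symplectic basis of $V_1^\perp$, this places $g_2$ inside the image of the embedding $\Sp_{2n}(F) \times \Sp_{2n}(F) \hookrightarrow \Sp_{4n}(F)$ described in the statement.

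For the last step, write $\alpha w_i = \sum_k \alpha_{k,i} w_k$; substituting into the stabilizer identity and comparing coefficients of $w_k$ yields $\sum_i a_{j,i}\alpha_{k,i} = \delta_{j,k}$, equivalent to $A \alpha^t = I$, so $\alpha = A^{-t}$, i.e.\ $g_1 = \alpha^{-t}$. The component $\beta$ is unconstrained because it acts trivially on $V_1$. Thus $(\alpha, \beta) \mapsto (\alpha^{-t}, (\alpha,\beta))$ gives a bijection from $\Sp_{2n}(F) \times \Sp_{2n}(F)$ onto $G_1$, and is a group homomorphism because $\alpha \mapsto \alpha^{-t}$ is an automorphism of $\Sp_{2n}(F)$ (the composition of the two anti-automorphisms $\alpha \mapsto \alpha^{-1}$ and $\alpha \mapsto \alpha^t$). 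The main bookkeeping obstacle is keeping the conventions in the two bases $\{e_i\}$ and $\{w_i\}$ (and the scalar $2a$ in the form on $V_1$) consistent throughout; once this is organized, the isomorphism falls out of a single rearrangement of Kronecker deltas.
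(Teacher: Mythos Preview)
Your proof is correct and follows essentially the same approach as the paper's: show that any $g_2$ in the stabilizer must preserve $V_1$, use non-degeneracy to split $W = V_1 \oplus V_1^{\perp}$ and hence $g_2$ into two symplectic blocks, and then read off the relation $g_1 = (g_2|_{V_1})^{-t}$ from the action formula. You carry out the matrix computations (the identity $A\alpha^{t}=I$ and the verification that the resulting bijection is a homomorphism) more explicitly than the paper does, but the structure is the same.
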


\begin{proof} According to the interpretation of this action given in the proof of Lemma \ref{lem:transitive}, for $(g_1,g_2)\in \Sp_{2n}(F)\times \Sp_{4n}(F)$ to be in $G_1,$ it is needed that, for $V_1:=\span\{w_1,\ldots,w_{2n}\},$ we have $g_2(V_1)=V_1.$ Since $V_1$ is non degenerate, we have the orthogonal direct decomposition
\[W=V_1\oplus V_1^{\perp},\]
where $V_1^{\perp}$ denotes the orthogonal complement of $V_1.$ Now, we immediately have $g_2(V_1^{\perp})=V_1^{\perp}$ and $g_2\mapsto (g_2|_{V_1},g_2|_{V_1^{\perp}})$ is injective. Note that $g_2|_{V_1}$ and $g_2|_{V_1^{\perp}}$ belong to the symplectic groups of $V_1$ and $V_1^{\perp},$ respectively. Then, for $g_1$ such that $(g_1,g_2)\in G_1$ we must have (from (\ref{eq:action})) that $g_1=(g_2|_{V_1})^{-t}.$
\end{proof}

Note that a function $f$  from $\ind_{G_1}^{\Sp_{2n}(F)\times \Sp_{4n}(F)} 1=\ind_{\Sp_{2n}(F)\times \Sp_{2n}(F)}^{\Sp_{2n}(F)\times \Sp_{4n}(F)} 1$ satisfies 
\[f(g_1'^{-t},(g_1',g_2'))(\alpha,\beta))=f((\alpha,\beta)),\]
for all $(\alpha,\beta)\in \Sp_{2n}(F)\times \Sp_{4n}(F)$ and $(g_1'^{-t},(g_1',g_2'))\in \Sp_{2n}(F)\times \Sp_{4n}(F)).$ $f$ is also smooth and compactly supported in $\Sp_{2n}(F)\times \Sp_{4n}(F)$ modulo $G_1.$
 Note that this means that $f((\alpha,\beta))=f(1,(\alpha^t,1)\beta)),$ so that $f$ is completely determined by  its restriction to $\Sp_{4n}(F).$ We define 
 \[\phi_f(\beta)=f(1,\beta).\]
 We also note that $\phi_f:\Sp_{4n}(F)\to \C$ is left $\Sp_{2n}(F)$-- invariant with respect to the second copy of  $\Sp_{2n}(F).$
 We get that 
 \[f\mapsto \phi_f\]
 is a bijection from $\ind_{\Sp_{2n}(F)\times \Sp_{2n}(F)}^{\Sp_{2n}(F)\times \Sp_{4n}(F)} 1$ to $\ind_{\Sp_{2n}(F)}^{\Sp_{4n}(F)} 1$ (we easily get that $\phi_f$ is smooth and compactly supported modulo the second copy of $\Sp_{2n}(F)$). The action of $\Sp_{2n}(F)\times \Sp_{4n}(F)$ on $\ind_{\Sp_{2n}(F)\times \Sp_{2n}(F)}^{\Sp_{2n}(F)\times \Sp_{4n}(F)} 1$ becomes 
 \begin{equation}
 \label{group_action}
 R(g_1,g_2)\phi(\beta)=\phi((g_1^t,1)\beta g_2)
 \end{equation}
 on $\ind_{\Sp_{2n}(F)}^{\Sp_{4n}(F)} 1.$ We have proved

 \begin{prop}$R_{\mathcal{H},\psi}(\omega_{2n,2n})$ is, as a $\Sp_{2n}(F)\times \Sp_{4n}(F)$ module, isomorphic to $\ind_{\Sp_{2n}(F)}^{\Sp_{4n}(F)} 1$ with the action of $\Sp_{2n}(F)\times \Sp_{4n}(F)$ given by (\ref{group_action}).
 \end{prop}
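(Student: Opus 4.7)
The plan is to assemble the preceding computations into the isomorphism, since the technical heart of the argument has already been carried out above. I would organize the proof in four steps, each of which corresponds to a block of work already performed in the excerpt, and I would be careful that the identifications I make along the way are equivariant for the full $\Sp_{2n}(F)\times \Sp_{4n}(F)$-action.

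First, I would invoke the general principle from \cite{MVW}, pp.~72--73: because $N$ acts on $\omega_{2n,2n}$ through the homothety $\phi(w)\mapsto \psi_a(\tfrac{1}{2}\mathrm{tr}(\langle w,w\rangle S))\phi(w)$, restriction of Schwartz functions to the locus $\Omega(\psi)$ (on which this character coincides with $\psi_{\mathcal{H}}$) realizes $R_{\mathcal{H},\psi}(\omega_{2n,2n})$ as $S(\Omega(\psi))$ with its natural $\Sp_{2n}(F)\times \Sp_{4n}(F)$-action. The explicit determination of $\Omega(\psi)$ as the variety of $2n$-tuples $w=(w_1,\dots,w_{2n})$ whose Gram matrix equals $2a\left[\begin{smallmatrix}0&I_n\\-I_n&0\end{smallmatrix}\right]$ then follows from the non-degeneracy argument done blockwise on $S$ (which is where the hypothesis $n\geq 2$ enters, since one needs two distinct indices to separate $x$ and $d$).

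Second, I would invoke Lemma \ref{lem:transitive} and the subsequent stabilizer computation to replace $S(\Omega(\psi))$ by a compactly induced representation. Transitivity comes from Witt's theorem: any two tuples in $\Omega(\psi)$ span non-degenerate $2n$-dimensional subspaces of $W$ with the same Gram matrix, hence there is an isometry of $W$ carrying one basis to the other. Fixing a base point $w_0$, the stabilizer $G_1$ is identified with $\Sp_{2n}(F)\times \Sp_{2n}(F)$, embedded in $\Sp_{2n}(F)\times \Sp_{4n}(F)$ via $(g_1,g_2)\mapsto (g_1^{-t},(g_1,g_2))$ using the orthogonal splitting $W=V_1\oplus V_1^{\perp}$. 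Consequently (\ref{eq:the representation_I}) yields
\[
R_{\mathcal{H},\psi}(\omega_{2n,2n})\cong \ind_{G_1}^{\Sp_{2n}(F)\times \Sp_{4n}(F)}\omega_{w_0}.
\]

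Third, I would verify that the character $\omega_{w_0}$ on $G_1$ is in fact trivial. Once the unipotent homothety has already been absorbed into the definition of $\Omega(\psi)$, the remaining action on a delta-function at $w_0$ by $(g_1,1)$ and $(1,g_2)$ is by the formulas from \cite{Kudla1}, p.~38, which only permute the coordinates $w_i$ of $w_0$; because $G_1$ is precisely the set that fixes $w_0$, no Weil cocycle or determinantal factor can survive. Thus $\omega_{w_0}=\mathbf{1}$ and we may replace it by the trivial character.

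Finally, I would produce the claimed description by explicitly rewriting the induced space. Given $f\in \ind_{\Sp_{2n}(F)\times \Sp_{2n}(F)}^{\Sp_{2n}(F)\times \Sp_{4n}(F)}\mathbf{1}$, the relation $f(g_1'^{-t},(g_1',g_2'))(\alpha,\beta))=f(\alpha,\beta)$ forces $f(\alpha,\beta)=f(1,(\alpha^{t},1)\beta)$, so $f$ is determined by $\phi_f(\beta):=f(1,\beta)$, which is left-invariant under the second copy of $\Sp_{2n}(F)$ and smooth compactly supported modulo that subgroup. The map $f\mapsto \phi_f$ is a bijection onto $\ind_{\Sp_{2n}(F)}^{\Sp_{4n}(F)}\mathbf{1}$, and translating the left regular action of $\Sp_{2n}(F)\times \Sp_{4n}(F)$ through this bijection gives precisely (\ref{group_action}). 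The main obstacle in this program is really the first step (the explicit solution of the defining character equation for $\Omega(\psi)$, which fails for $n=1$ and is why that case is treated separately in \cite{DHL}); the remaining steps are bookkeeping with Mackey-type identifications.
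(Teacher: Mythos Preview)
Your proposal is correct and follows essentially the same route as the paper: the Proposition is a summary of the preceding computations, and your four steps (MVW restriction to $\Omega(\psi)$ with its explicit determination, transitivity plus stabilizer, triviality of $\omega_{w_0}$, and the bijection $f\mapsto\phi_f$) match the paper's development in the same order. The only cosmetic difference is that the paper justifies $\omega_{w_0}=1$ first by noting that $G_1\cong\Sp_{2n}(F)\times\Sp_{2n}(F)$ admits no nontrivial character and then confirms via Kudla's formulas, whereas you argue directly from those formulas; both are fine.
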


 Note that the first copy of $\Sp_{2n}(F)$ acts as the left translation; we denote this action by $\lambda.$ 

 Now we want to analyze the biggest quotient of $\ind_{\Sp_{2n}(F)}^{\Sp_{4n}(F)} 1$  on which $\Sp_{2n}(F)$ (through $\lambda$) acts trivially. To that end, we define
  \[S'=\span\{\lambda (g_2)\phi-\phi:g_2\in \Sp_{2n}(F),\phi \in \ind_{\Sp_{2n}(F)}^{\Sp_{4n}(F)} 1\}.\]
  Obviously, $\ind_{\Sp_{2n}(F)}^{\Sp_{4n}(F)} 1/S'$ is that quotient; we consider it as a  $\Sp_{4n}(F)$--module.

 \begin{thm}
 \label{the_largest_quo_in_Weil}
 There is an isomorphism of  $\Sp_{4n}(F)$--modules:
 \[\ind_{\Sp_{2n}(F)}^{\Sp_{4n}(F)} 1/S'\cong \ind_{\Sp_{2n}(F)\times \Sp_{2n}(F)}^{\Sp_{4n}(F)} 1.\]
 \end{thm}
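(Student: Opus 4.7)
The plan is to identify $\ind_{\Sp_{2n}(F)}^{\Sp_{4n}(F)} 1/S'$ with the space of coinvariants of $\ind_{\Sp_{2n}(F)}^{\Sp_{4n}(F)} 1$ under the action $\lambda$, and then use that $\lambda$ corresponds (via $g_1\mapsto (g_1^t,1)$) to left translation by the first copy $K:=\{(h,1):h\in \Sp_{2n}(F)\}$ of $\Sp_{2n}(F)$ inside $\Sp_{4n}(F)$ under the embedding (\ref{eq_SL_Sp4}). Let $H:=\{(1,h):h\in \Sp_{2n}(F)\}$ denote the second copy. Then $H$ and $K$ commute inside $\Sp_{4n}(F)$ and intersect trivially, so $HK$ is their internal direct product and is precisely the image of $\Sp_{2n}(F)\times \Sp_{2n}(F)$; the right-hand side of the theorem is $\ind_{HK}^{\Sp_{4n}(F)} 1$.

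I would exhibit the isomorphism by the averaging map
\[\Psi \colon \ind_{\Sp_{2n}(F)}^{\Sp_{4n}(F)} 1 \longrightarrow \ind_{HK}^{\Sp_{4n}(F)} 1,\qquad \Psi(\phi)(g) = \int_K \phi(kg)\, dk,\]
for a fixed Haar measure $dk$ on $K$. The integrand is compactly supported in $k$ because $\phi$ is compactly supported modulo $H$ and $H\cap K=\{1\}$; the resulting $\Psi(\phi)$ is smooth, compactly supported modulo $HK$, and left-$HK$-invariant (using that $K$ commutes with $H$ and $\phi$ is left-$H$-invariant). Equivariance under the $\Sp_{4n}(F)$-action in (\ref{group_action}) is immediate since right and left translations commute, and a change of variables in the integral shows $S'\subseteq \ker\Psi$. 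Surjectivity of $\Psi$ follows by a standard partition-of-unity argument, producing preimages for characteristic functions of compact-open subsets of $HK\backslash \Sp_{4n}(F)$.

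The main obstacle is establishing the reverse inclusion $\ker\Psi\subseteq S'$, i.e., that integration over $K$ realizes precisely the $K$-coinvariants of $\ind_{\Sp_{2n}(F)}^{\Sp_{4n}(F)} 1\cong C_c^\infty(H\backslash \Sp_{4n}(F))$. This is standard once one observes that $K$ acts freely on $H\backslash \Sp_{4n}(F)$ (stabilizers lie in $K\cap H=\{1\}$): one decomposes $\phi\in\ker\Psi$ using slices for a compact-open subgroup $K_0\subset K$, and on each slice invokes the elementary fact that a function on $K$ with zero integral is a finite sum of differences $\lambda(k)f-f$. Alternatively, one can bypass the explicit $\Psi$ altogether by a Yoneda argument: Frobenius reciprocity gives
\[\hom_{\Sp_{4n}(F)}\!\left(\ind_{\Sp_{2n}(F)}^{\Sp_{4n}(F)} 1,\, \pi\right)\cong \pi^{H},\]
naturally in the smooth $\Sp_{4n}(F)$-module $\pi$, and the $K$-action inherited from $\lambda$ corresponds to the restriction of the $\Sp_{4n}(F)$-action to $K$; killing it yields $\pi^{HK}\cong \hom_{\Sp_{4n}(F)}(\ind_{HK}^{\Sp_{4n}(F)}1,\pi)$, and Yoneda concludes.
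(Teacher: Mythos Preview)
Your main approach is essentially the same as the paper's: both construct the averaging map over the first copy $K$ of $\Sp_{2n}(F)$ (your $\Psi$ is the paper's $T$), verify it lands in $\ind_{HK}^{\Sp_{4n}(F)}1$, is $\Sp_{4n}(F)$-equivariant, kills $S'$, is surjective, and has kernel exactly $S'$. The only cosmetic differences are in the execution of the last two steps: for surjectivity the paper factors through the standard surjections $P_{\delta_1},P_{\delta_2}$ from $C_c^\infty(\Sp_{4n}(F))$ and observes $T\circ P_{\delta_1}=P_{\delta_2}$, while you invoke a partition-of-unity argument; for the kernel the paper writes $\phi=P_{\delta_1}(f)$ with $f$ a finite sum of characteristic functions of cosets and uses local constancy on $K_0$-cosets to express $\phi$ explicitly as a sum of $\lambda$-differences, which is exactly the ``slice'' argument you sketch.

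Your alternative Yoneda/Frobenius route is a genuinely different (and cleaner) packaging: it replaces the explicit kernel computation by the adjunction $\hom_{\Sp_{4n}(F)}(\ind_H^{\Sp_{4n}(F)}1,\pi)\cong\pi^H$ and identifies the $\lambda$-coinvariants with $\pi^{HK}$. The paper does not take this route. The only thing you should make explicit if you pursue it is the compatibility of the $K$-action through $\lambda$ with the $K$-action on $\pi^H$ under the Frobenius isomorphism; this is true because $K$ commutes with $H$, but it deserves a sentence.
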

\begin{proof}We denote
\[T(\phi)(g)=\int_{\Sp_{2n}(F)}\phi((x,1)g)\deri x.\]
For $\phi\in \ind_{\Sp_{2n}(F)}^{\Sp_{4n}(F)} 1$ the integral on the right hand side converges. Indeed, fix $g\in \Sp_{4n}(F).$ We know that there exist a compact set $C_1\subset \Sp_{4n}(F)$ such that $\mathrm{supp} \phi\subset (\{1\}\times \Sp_{2n}(F))C_1.$ Assume that $\phi((x,1)g)\neq 0,$ which means that $(x,1)\in (\{1\}\times \Sp_{2n}(F))C_1g^{-1}.$ We denote $C_1':=C_1g^{-1}.$ Note that $C_1'\cap \Sp_{2n}(F)\times \Sp_{2n}(F)$ is a compact set in $ \Sp_{2n}(F)\times \Sp_{2n}(F).$ We denote by $p_i,\;i=1,2$ the projections from $ \Sp_{2n}(F)\times \Sp_{2n}(F)$ to the first and the second copy of $\Sp_{2n}(F).$
This means that 
\[(x,1)\in (\{1\}\times \Sp_{2n}(F))(p_1(C_1')\times p_2(C_1'))=p_1(C_1')\times  \Sp_{2n}(F).\]
This means that $x\in p_1(C_1'),$ which is a compact set in (the first copy of) $\Sp_{2n}(F).$ Thus, $x\mapsto \phi ((x,1)g)$ is a smooth function with the compact support in $\Sp_{2n}(F).$ Thus, $T(\phi)$ is well defined function on $\Sp_{4n}(F)$. Also, it is smooth. Again, if $C_1$ denotes the compact set  in $\Sp_{4n}(F)$ related to the support of $\phi$ as above, then it is easy to see that $\mathrm{supp} T(\phi)\subset (\Sp_{2n}(F)\times \Sp_{2n}(F))C_1.$
Also, it is immediate that the following holds
\[T(\phi)((g_1,g_2)g)=T(\phi)(g),\forall (g_1,g_2)\in Sp_{2n}(F)\times Sp_{2n}(F), g\in Sp_{4n}(F),\]
and 
\[T(R(g)\phi)=R(g)T(\phi).\]
Thus, $T$ is $Sp_{4n}(F)$--intertwining operator between $\ind_{\Sp_{2n}(F)}^{\Sp_{4n}(F)} 1$ and $\ind_{\Sp_{2n}(F)\times \Sp_{2n}(F)}^{\Sp_{4n}(F)} 1.$
We immediately see that $T|_{S'}=0.$ 

We now prove the surjectivity of the operator $T.$ We use (\cite{Bourbaki_Haar}, cf.~\cite{Casselman_notes}, p.~27) to introduce the mapping
\[P_{\delta_1}:C_c^{\infty}(\Sp_{4n}(F))\to \ind_{\Sp_{2n}(F)}^{\Sp_{4n}(F)} 1\]
given by
\[P_{\delta_1}(f)(g)=\int_{\Sp_{2n}(F)}f((1,x)g)\deri x.\]
It is known that $P_{\delta_1}$ is surjective (\cite{Casselman_notes}, p.~27).
Analogously we define  a (surjective) mapping
\[P_{\delta_2}:C_c^{\infty}(\Sp_{4n}(F))\to \ind_{\Sp_{2n}(F)\times \Sp_{2n}(F)}^{\Sp_{4n}(F)} 1\]
given by 
\[P_{\delta_2}(f)(g)=\int_{\Sp_{2n}(F)\times \Sp_{2n}(F)}f((x,y)g)\deri x \deri y.\]
We immediately see that 
\begin{equation}
\label{eq_delta1_delta2}
P_{\delta_2}(f)(g)=\int_{\Sp_{2n}(F)}P_{\delta_1}(\lambda(x^t)f)(g)\deri x=\int_{\Sp_{2n}(F)}\lambda(x^t)P_{\delta_1}(f)(g)\deri x=T(P_{\delta_1}(f))(g).
\end{equation}
Thus, $P_{\delta_2}(f)=T(P_{\delta_1}(f))$ and $T$ is surjective.

Now we prove that $\mathrm{Ker}\,T=S'.$ Assume that $\phi\in \mathrm{Ker}\,T.$ Then, there exists $f\in C_c^{\infty}(\Sp_{4n}(F))$ such that $\phi=P_{\delta_1}(f).$ Thus, $T(\phi)=P_{\delta_2}(f)=0.$ There exist an open compact subgroup $K$ of $\Sp_{4n}(F),\;g_1,\ldots,g_m\in \Sp_{4n}(F)$  and $c_1,\ldots,c_m\in \C$ such  that
\[f=\sum_{i=1}^m c_i\chi_{Kg_i}.\]
Here we assume that for $i\neq j$ $Kg_i\cap Kg_j=\emptyset$ and $\chi_{Kg_i}$ denotes the characteristic function on the right coset $Kg_i.$
We examine the first equation in (\ref{eq_delta1_delta2}). The integrating function,
 \[x\mapsto P_{\delta_1}((x,1)g)=\sum_{i=1}^mc_i\mu_{\{1\}\times \Sp_{2n}(F)}((x^{-1},1)Kg_ig^{-1}\cap \{1\}\times \Sp_{2n}(F))\]
 is locally (uniformly) constant.  Here $\mu_{\{1\}\times \Sp_{2n}(F)}$ denotes a Haar measure on $\{1\}\times \Sp_{2n}(F).$ Indeed, if we denote by $K_0:=K\cap \Sp_{2n}(F)\times \{1\},$ which is compact and open in $\Sp_{2n}(F)\times \{1\},$ we see that the function
  \[x\mapsto\sum_{i=1}^mc_i\mu_{\{1\}\times \Sp_{2n}(F)}((x^{-1},1)Kg_ig^{-1}\cap \{1\}\times \Sp_{2n}(F))\] is a constant on cosets $K_0\setminus \Sp_{2n}(F)\times \{1\}.$ Also, we effectively integrate  in  (\ref{eq_delta1_delta2}) over a compact set. We  integrate over a finite set of different cosets of $K_0\setminus \Sp_{2n}(F)\times \{1\}.$ Thus, there exist $x_1,\ldots,x_l \in \Sp_{2n}(F)\times \{1\}$ such that
\[0=\sum_{j=1}^l\int_{K_0x_j}(\lambda(x^t)P_{\delta_1}(f))(g)\deri x=\mu_{\{1\}\times \Sp_{2n}(F)}(K_0)\sum_{j=1}^lP_{\delta_1}(f)((x_j,1)g),\]
for every $g\in \Sp_{4n}(F).$
This means
\[\lambda(x_1^t)P_{\delta_1}(f)=-\sum_{j=2}^l\lambda(x_j^t)P_{\delta_1}(f),\]
so that
\[P_{\delta_1}(f)=-\sum_{j=2}^l\lambda(x_1^{-t}x_j^t)P_{\delta_1}(f).\]
This means
\[P_{\delta_1}(f)=\phi=-\frac{1}{l}\sum_{j=2}^l(\lambda(x_1^{-t}x_j^t)\phi-\phi),\]
and this means that $\mathrm{Ker}\,T=S'.$
\end{proof}

\subsection{Conclusion}
We continue to assume that $\pi$ is an irreducible representation of $\O_{4n}(F)$ with a non-zero Shalika model such that $\theta(\pi)\neq 0$ is its (irreducible) small theta lift.
We thus have
\[\omega_{2n,2n}\twoheadrightarrow \pi\otimes \theta(\pi),\]
and, since taking a twisted Jacquet module is exact, we have
\[R_{\mathcal{H},\psi}(\omega_{2n,2n})\twoheadrightarrow R_{\mathcal{H},\psi}(\pi)\otimes \theta(\pi).\]
Since we assumed that $\pi$ has a non-zero Shalika model, there is a surjective $\Sp_{2n}(F)\times\Sp_{4n}(F)$ intertwining
\[R_{\mathcal{H},\psi}(\omega_{2n,2n})\twoheadrightarrow 1_{\Sp_{2n}(F)}\otimes \theta(\pi).\]
From Theorem \ref{the_largest_quo_in_Weil} it follows that there is an epimorphism
\[\ind_{\Sp_{2n}(F)\times \Sp_{2n}(F)}^{\Sp_{4n}(F)} 1\twoheadrightarrow \theta(\pi).\]
Taking the smooth adjoint of an epimorphism above, we get that
\[\mathrm{Hom}(\widetilde{\theta(\pi)}, \Ind_{\Sp_{2n}(F)\times \Sp_{2n}(F)}^{\Sp_{4n}(F)} 1)\neq 0,\]
since $\widetilde{\ind_{\Sp_{2n}(F)\times \Sp_{2n}(F)}^{\Sp_{4n}(F)} 1}\cong  \Ind_{\Sp_{2n}(F)\times \Sp_{2n}(F)}^{\Sp_{4n}(F)} 1 .$ This is equivalent to the fact that the representation $\widetilde{\theta(\pi)}$ of $\Sp_{4n}(F)$ has a non-zero symplectic linear model. But if $\widetilde{\theta(\pi)}$ has this model, the representation $\theta(\pi)$ also has it  (cf. the proof of Theorem 17 of \cite{Ginzburg_Rallis_Soudry_explicit}) and we have proved  the following theorem.
\begin{thm}
\label{main}
Assume $\tau$ is an irreducible smooth representation of $\SO_{4n}(F)$ having a non-zero generalized Shalika model. Then, the irreducible non-zero representation $\theta'(\tau)$ (the small theta lift of $\tau,$ as explained in Introduction) has a non-zero symplectic linear model.
\end{thm}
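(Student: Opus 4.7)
The plan is to assemble the pieces developed in the previous subsection into a clean descent from Weil representations to periods. First I would move from $\SO_{4n}(F)$ to $\O_{4n}(F)$ so that the Howe correspondence is directly usable. By Lemma \ref{lem_SO_O} and the remark following it, a nonzero generalized Shalika functional $\lambda$ on $\tau$ extends to a nonzero generalized Shalika functional on some irreducible $\pi$ of $\O_{4n}(F)$ whose restriction to $\SO_{4n}(F)$ contains $\tau$; Theorem \ref{theta_occurrence} combined with the definition of $\theta'(\tau)$ guarantees that this $\pi$ may be chosen with $\theta(\pi)\neq 0$, and $\theta(\pi)$ is (a summand of) $\theta'(\tau)$. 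Thus it suffices to prove that $\theta(\pi)$ has a nonzero symplectic linear model.

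Next I would exploit the defining surjection $\omega_{2n,2n}\twoheadrightarrow \pi\otimes\theta(\pi)$. Since the twisted Jacquet functor $R_{\mathcal{H},\psi}$ with respect to the Siegel unipotent radical $N$ is exact, it yields an $\Sp_{2n}(F)\times\Sp_{4n}(F)$-equivariant surjection $R_{\mathcal{H},\psi}(\omega_{2n,2n})\twoheadrightarrow R_{\mathcal{H},\psi}(\pi)\otimes\theta(\pi)$. The generalized Shalika hypothesis on $\pi$ is, by the discussion opening the section, exactly the statement that $R_{\mathcal{H},\psi}(\pi)$ admits the trivial representation of $\Sp_{2n}(F)$ as a quotient; composing I obtain a surjection
\[R_{\mathcal{H},\psi}(\omega_{2n,2n})\twoheadrightarrow \mathbf{1}_{\Sp_{2n}(F)}\otimes \theta(\pi).\]

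Now Theorem \ref{the_largest_quo_in_Weil} intervenes decisively: the maximal quotient of $R_{\mathcal{H},\psi}(\omega_{2n,2n})$ on which the first copy of $\Sp_{2n}(F)$ acts trivially has already been identified as an $\Sp_{4n}(F)$-module with $\ind_{\Sp_{2n}(F)\times\Sp_{2n}(F)}^{\Sp_{4n}(F)}\mathbf{1}$. The surjection above therefore factors through an $\Sp_{4n}(F)$-epimorphism
\[\ind_{\Sp_{2n}(F)\times\Sp_{2n}(F)}^{\Sp_{4n}(F)}\mathbf{1}\twoheadrightarrow \theta(\pi).\]

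Finally I would dualize. Since the smooth contragredient of the compact induction of the trivial character from a closed subgroup is the corresponding non-compact induction, taking smooth duals yields a nonzero map $\widetilde{\theta(\pi)}\to \Ind_{\Sp_{2n}(F)\times\Sp_{2n}(F)}^{\Sp_{4n}(F)}\mathbf{1}$, which by Frobenius reciprocity is precisely a nonzero symplectic linear functional on $\widetilde{\theta(\pi)}$. The last step transfers this functional from $\widetilde{\theta(\pi)}$ back to $\theta(\pi)$ via the MVW-type involution argument in the proof of Theorem~17 of \cite{Ginzburg_Rallis_Soudry_explicit}, relying on the stability of the symplectic linear subgroup under a suitable conjugation and on the invariance of the symplectic linear period under contragredient. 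The conceptual core of the argument is therefore already carried by Theorem \ref{the_largest_quo_in_Weil}; the main obstacles I expect in the present step are (i) keeping the $\O_{4n}(F)/\SO_{4n}(F)$ bookkeeping coherent so that the extended Shalika functional is genuinely nonzero on $\pi$, and (ii) making the duality and MVW transfer precise, since the symplectic linear period is bilinear in the two $\Sp_{2n}(F)$ factors and one must check that the involution used respects both.
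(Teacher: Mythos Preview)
Your proposal is correct and follows essentially the same route as the paper's own proof: pass to $\O_{4n}(F)$ via Lemma~\ref{lem_SO_O} and the subsequent remark, apply the exact twisted Jacquet functor to the defining surjection $\omega_{2n,2n}\twoheadrightarrow\pi\otimes\theta(\pi)$, invoke Theorem~\ref{the_largest_quo_in_Weil} to obtain $\ind_{\Sp_{2n}(F)\times\Sp_{2n}(F)}^{\Sp_{4n}(F)}\mathbf{1}\twoheadrightarrow\theta(\pi)$, dualize, and then transfer the period from $\widetilde{\theta(\pi)}$ to $\theta(\pi)$ via \cite{Ginzburg_Rallis_Soudry_explicit}. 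The only cosmetic difference is that in the paper $\theta'(\tau)$ is by definition equal to the chosen $\theta(\pi)$ rather than merely containing it as a summand, so your parenthetical ``(a summand of)'' can be dropped.
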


 \smallskip
\noindent\textit{Acknowledgments.}
We wish to thank American Institute of Mathematics in Palo Alto (Automorphic forms and harmonic analysis on covering groups 2013.) and CIRM, Luminy (WINE conference 2013.) where the author began to study problems related to theta correspondence and generalized Shalika models.

This work has been supported in part by Croatian Science Foundation under the
project 9364.

\bibliographystyle{siam}
\bibliography{toy_example}

\end{document}